\documentclass[12pt, reqno]{amsart}
\usepackage{amsmath, amsthm, amscd, amsfonts, amssymb, graphicx, color}
\usepackage[bookmarksnumbered, colorlinks, plainpages]{hyperref}
\hypersetup{colorlinks=true,linkcolor=red, anchorcolor=green, citecolor=cyan, urlcolor=red, filecolor=magenta, pdftoolbar=true}

\usepackage{tensor}

\textheight 22.5truecm \textwidth 14.5truecm
\setlength{\oddsidemargin}{0.35in}\setlength{\evensidemargin}{0.35in}

\setlength{\topmargin}{-.5cm}

\newtheorem{theorem}{Theorem}[section]

\newtheorem{proposition}[theorem]{Proposition}

\theoremstyle{definition}
\newtheorem{definition}[theorem]{Definition}
\newtheorem{example}[theorem]{Example}

\theoremstyle{remark}
\newtheorem{remark}[theorem]{Remark}
\numberwithin{equation}{section}

\setlength{\parindent}{0pt}

\usepackage{fullpage}

\begin{document}

\setcounter{page}{1}

\title[FPT on $G$-metric spaces]{Fixed point theorems in $G$-metric space.}

\author[Ya\'e Olatoundji Gaba]{Ya\'e Olatoundji Gaba$^{1,*}$}

\address{$^{1}$Department of Mathematics and Applied Mathematics, University of Cape Town, South Africa.}
\email{\textcolor[rgb]{0.00,0.00,0.84}{gabayae2@gmail.com
}}

\subjclass[2010]{Primary 47H05; Secondary 47H09, 47H10.}

\keywords{$G$-metric, fixed point, orbitally continuous.}

\begin{abstract}
In this article, we present a new type of fixed point for single valed mapping in a $G$-complete $G$-metric space.
\end{abstract} 

\maketitle

\section{Introduction and Preliminaries}

The importance of fixed point in mathematical analysis and topology is no longer to be established. For instance, it is used to determine existence and uniqueness of solutions of differential and integral equations. Initially stated in the metric space setting, fixed point theory has found its way in more general spaces, even though most of them are metric-like. One of these general spaces, space of interest for our study, is the $G$-metric space, where many fixed point theorems have already been etablished, see \cite{Gaba1,Gaba5,s,Mustafa,mustafa3,mustafa4}. Throughout the years, different authors proposed different types of formulations, all expressing different contractive-type conditions and most of these contractions are Picard operators and therefore lead to the uniqeness of the fixed point. However, for a given self mapping $T$ on set $X$, if the set of fixed point of $T$ is nonempty and the sequence of successive approximation for any intial point converges to a fixed point of $T$, $T$ is called a weakly Picard operator.  
 We present here new fixed point results in a $G$-complete $G$-metric space, both in the Picard and weakly Picard cases. The mappings we consider satisfy a rational type almost contaction.
 
 The elementary facts about $G$-metric spaces can be found in \cite{Mustafa}. We give here a shortened form of these prerequisites.

\begin{definition} (Compare \cite[Definition 3]{Mustafa})
Let $X$ be a nonempty set, and let the function $G:X\times X\times X \to [0,\infty)$ satisfy the following properties:
\begin{itemize}
\item[(G1)] $G(x,y,z)=0$ if $x=y=z$ whenever $x,y,z\in X$;
\item[(G2)] $G(x,x,y)>0$ whenever $x,y\in X$ with $x\neq y$;
\item[(G3)] $G(x,x,y)\leq G(x,y,z) $ whenever $x,y,z\in X$ with $z\neq y$;
\item[(G4)] $G(x,y,z)= G(x,z,y)=G(y,z,x)=\ldots$, (symmetry in all three variables);

\item[(G5)]
$$G(x,y,z) \leq [G(x,a,a)+G(a,y,z)]$$ for any points $x,y,z,a\in X$.
\end{itemize}
Then $(X,G)$ is called a \textbf{$G$-metric space}.

\end{definition}

\begin{proposition}\label{prop1} (Compare \cite[Proposition 6]{Mustafa})
Let $(X,G)$ be a $G$-metric space. Define on $X$ the metric  $d_G$ by $d_G(x,y)= G(x,y,y)+G(x,x,y)$ whenever $x,y \in X$. Then for a sequence $(x_n) \subseteq X$, the following are equivalent
\begin{itemize}
\item[(i)] $(x_n)$ is $G$-convergent to $x\in X.$

\item[(ii)] $\lim_{n,m \to \infty}G(x,x_n,x_m)=0.$

\item[(iii)]  $\lim_{n \to \infty}d_G(x_n,x)=0$.

\item[(iv)]$\lim_{n \to \infty}G(x,x_n,x_n)=0.$ 

\item[(v)]$\lim_{n \to \infty}G(x_n,x,x)=0.$ 
\end{itemize}

\end{proposition}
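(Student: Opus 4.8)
The plan is to prove the statement by closing the circle of implications
(i) $\Leftrightarrow$ (ii) $\Rightarrow$ (iv) $\Rightarrow$ (v) $\Rightarrow$ (iii) $\Rightarrow$ (iv), supplemented by the one nontrivial implication (iv) $\Rightarrow$ (ii); this makes all five conditions mutually equivalent. The equivalence (i) $\Leftrightarrow$ (ii) requires no argument, since (ii) is precisely the defining condition for $(x_n)$ to be $G$-convergent to $x$. Likewise (ii) $\Rightarrow$ (iv) is immediate: restrict the double limit in (ii) to the diagonal $m=n$.

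Before treating the remaining implications I would record two elementary inequalities that hold in any $G$-metric space, using only (G4) and (G5). Taking $a=y$ in (G5) gives $G(x,x,y)\le G(x,y,y)+G(y,x,y)=2\,G(x,y,y)$, and interchanging the roles of $x$ and $y$ yields $G(x,y,y)\le 2\,G(x,x,y)$. With these, (iv) $\Leftrightarrow$ (v) follows by applying the inequalities with $x$ and $x_n$ swapped together with full symmetry (G4); for instance $G(x,x_n,x_n)=G(x_n,x_n,x)\le 2\,G(x_n,x,x)$ gives (v) $\Rightarrow$ (iv), and the reverse bound gives (iv) $\Rightarrow$ (v). For (iii), observe that $d_G(x_n,x)=G(x_n,x,x)+G(x_n,x_n,x)$ is a sum of two nonnegative terms, so $d_G(x_n,x)\to 0$ if and only if both $G(x_n,x,x)\to 0$ (this is (v)) and $G(x_n,x_n,x)=G(x,x_n,x_n)\to 0$ (this is (iv)); since (iv) and (v) are already equivalent, (iii) is equivalent to each of them, which supplies both (v) $\Rightarrow$ (iii) and (iii) $\Rightarrow$ (iv).

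The only step carrying genuine content is (iv) $\Rightarrow$ (ii), and here I would apply (G5) twice with carefully chosen auxiliary points. First, with $a=x_n$,
$$G(x,x_n,x_m)\le G(x,x_n,x_n)+G(x_n,x_n,x_m).$$
Then rewrite $G(x_n,x_n,x_m)=G(x_m,x_n,x_n)$ by (G4) and apply (G5) again with $a=x$ to get $G(x_m,x_n,x_n)\le G(x_m,x,x)+G(x,x_n,x_n)$. Combining,
$$G(x,x_n,x_m)\le 2\,G(x,x_n,x_n)+G(x_m,x,x).$$
Under hypothesis (iv) the first term tends to $0$ as $n\to\infty$, and by the equivalence (iv) $\Leftrightarrow$ (v) the second term tends to $0$ as $m\to\infty$; hence $G(x,x_n,x_m)\to 0$, which is (ii).

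I expect the main obstacle to be exactly the choice of intermediate points in this last step: a naive use of (G5) tends to reproduce $G(x,x_n,x_m)$ on the right-hand side and yields only a circular inequality, so the point is to funnel every term into the quantities controlled by (iv) and (v). Everything else is routine bookkeeping with the symmetry axiom (G4) and the two elementary inequalities; note in particular that (G3) is not needed anywhere in the argument.
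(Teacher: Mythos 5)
Your proof is correct, and it is essentially the standard argument: the paper itself states this proposition without proof (deferring to Mustafa--Sims, whose Proposition 6 it reproduces), and your chain of implications, the two auxiliary inequalities $G(x,x,y)\le 2G(x,y,y)$ and $G(x,y,y)\le 2G(x,x,y)$ obtained from (G5) with $a=y$ (resp.\ $a=x$), and the double application of (G5) for (iv) $\Rightarrow$ (ii) are exactly how the cited source proceeds. Nothing to add.
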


\begin{proposition}(Compare \cite[Proposition 9]{Mustafa})

In a $G$-metric space $(X,G)$, the following are equivalent
\begin{itemize}
\item[(i)] The sequence $(x_n) \subseteq X$ is $G$-Cauchy.

\item[(ii)] For each $\varepsilon >0$ there exists $N \in \mathbb{N}$ such that $G(x_n,x_m,x_m)< \varepsilon$ for all $m,n\geq N$.

\end{itemize}

\end{proposition}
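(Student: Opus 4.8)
The plan is to prove the two implications separately, with essentially all of the (minimal) work concentrated in $(ii)\Rightarrow(i)$.

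For $(i)\Rightarrow(ii)$ there is nothing to do beyond unwinding definitions: if $(x_n)$ is $G$-Cauchy then, given $\varepsilon>0$, there is $N\in\mathbb{N}$ with $G(x_n,x_m,x_l)<\varepsilon$ for all $n,m,l\geq N$, and specializing to $l=m$ yields precisely $(ii)$.

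For $(ii)\Rightarrow(i)$ I would fix $\varepsilon>0$, apply $(ii)$ with $\varepsilon/2$ in place of $\varepsilon$ to obtain an index $N$ such that $G(x_p,x_q,x_q)<\varepsilon/2$ whenever $p,q\geq N$, and then estimate $G(x_n,x_m,x_l)$ for arbitrary $n,m,l\geq N$ by splitting it with the rectangle inequality (G5). Choosing the auxiliary point to be $x_m$, axiom (G5) gives
\[
G(x_n,x_m,x_l)\leq G(x_n,x_m,x_m)+G(x_m,x_m,x_l),
\]
where the first term is already of the form controlled by $(ii)$, and the second becomes so after rewriting $G(x_m,x_m,x_l)=G(x_l,x_m,x_m)$ via the total symmetry axiom (G4). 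Both terms are then $<\varepsilon/2$, so $G(x_n,x_m,x_l)<\varepsilon$ for all $n,m,l\geq N$, i.e. $(x_n)$ is $G$-Cauchy.

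The argument is short, and the only thing to watch — the sole potential obstacle — is the combinatorial choice of which variable to feed into (G5) as the repeated/auxiliary point, so that the two pieces produced genuinely match the one-parameter family of inequalities supplied by $(ii)$; it is precisely the permutation invariance in (G4) that makes this matching possible. Note that no appeal is made to Proposition \ref{prop1} or to any completeness hypothesis: the equivalence follows directly from the $G$-metric axioms (G4) and (G5).
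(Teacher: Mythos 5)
Your proof is correct, and since the paper states this proposition without proof (deferring to Mustafa--Sims), your argument is exactly the standard one: the forward direction is a specialization $l=m$ of the definition, and the converse uses (G5) with auxiliary point $x_m$ together with the symmetry (G4) to reassemble the general term from two terms of the form $G(x_p,x_q,x_q)$. No gaps.
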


\begin{definition} (Compare \cite[Definition 4]{Mustafa})
 A $G$-metric space $(X,G,K)$ is said to be symmetric if 
 $$G(x,y,y) = G(x,x,y), \ \text{for all } x,y \in X.$$

\end{definition}

\begin{definition} (Compare \cite[Definition 9]{Mustafa})
 A $G$-metric space $(X,G)$ is $G$-complete if every $G$-Cauchy sequence of elements of $(X,G)$ is $G$-convergent in  $(X,G)$. 

\end{definition}

We conclude this introductory part with:

\begin{definition} 
A self mapping $T$ defined on a $G$-metric space $(X,G,K)$ is said to be \textbf{orbitally continuous} if and only if $$\lim_{i\to \infty}T^{n_i}x=x^* \in X \Longrightarrow Tx^*=\lim_{i\to \infty}TT^{n_i}x.$$
\end{definition}

\section{The results}

\begin{theorem}\label{thm1}

Let $(X,G)$ be a symmetric $G$-complete $G$-metric space and $T$ be a mapping from $X$ to itself. Suppose that $T$ satisfies the following condition:

\begin{equation}\label{eq1}
G(Tx,Ty,Tz) \leq \left( \frac{G(Tx,y,z)+G(x,Ty,z) +G(x,y,Tz) }{G(x,Tx,Tx)+G(y,Ty,Ty)+G(z,Tz,Tz)+1} \right)G(x,y,z), 
\end{equation}
 for all $x,y,z\in X$. Then
 \begin{itemize}
 \item[(a)] $T$ has at least one fixed point $\xi\in X;$
 
  \item[(b)] for any $x\in X$, the sequence $\{T^nx\}$ $G$-converges to a fixed point;
 
  \item[(c)] if $\xi, y^*\in X$ are two distinct fixed points, then
   $$G(\xi, y^*,y^*) = G(\xi, \xi,y^*)\geq \frac{1}{3}.$$
 
\end{itemize}

\end{theorem}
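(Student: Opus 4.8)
The plan is to iterate $T$ from an arbitrary starting point $x_0\in X$, set $x_n = T^n x_0$, and show that the sequence $(x_n)$ is $G$-Cauchy. First I would specialize the contractive inequality \eqref{eq1} to the triple $(x,y,z) = (x_{n-1}, x_n, x_n)$. Using symmetry of the $G$-metric (so that $G(u,v,v) = G(u,u,v)$) together with (G3) and (G5), I would estimate the numerator $G(Tx_{n-1}, x_n, x_n) + G(x_{n-1}, Tx_n, x_n) + G(x_{n-1}, x_n, Tx_n) = G(x_n,x_n,x_n) + 2G(x_{n-1}, x_{n+1}, x_n)$ from above by a constant multiple of $G(x_{n-1}, x_n, x_n) + G(x_n, x_{n+1}, x_{n+1})$, while the denominator $G(x_{n-1}, Tx_{n-1}, Tx_{n-1}) + 2G(x_n, Tx_n, Tx_n) + 1 = G(x_{n-1}, x_n, x_n) + 2 G(x_n, x_{n+1}, x_{n+1}) + 1$ is bounded below by $1$ plus those same quantities. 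The point is to extract a ratio that is strictly less than $1$, or at least summably close to it, yielding $G(x_n, x_{n+1}, x_{n+1}) \le \lambda_n\, G(x_{n-1}, x_n, x_n)$ with $\prod \lambda_n$ controlling the tail; this gives $G$-Cauchyness via Proposition on $G$-Cauchy sequences.

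Once $(x_n)$ is $G$-Cauchy, $G$-completeness gives a limit $\xi$. To prove $\xi$ is a fixed point, I would plug $(x,y,z) = (x_n, \xi, \xi)$ into \eqref{eq1} and pass to the limit: the left side tends to $G(\xi, T\xi, T\xi)$ by $G$-continuity (Proposition \ref{prop1}), the numerator on the right stays bounded, the denominator stays $\ge 1$, and the factor $G(x_n,\xi,\xi) \to 0$, forcing $G(\xi, T\xi, T\xi) = 0$, hence $T\xi = \xi$ by (G1)–(G2). This simultaneously establishes (a) and (b), since the construction started from an arbitrary $x_0$.

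For part (c), suppose $\xi$ and $y^*$ are distinct fixed points. I would apply \eqref{eq1} to the triple $(x,y,z) = (\xi, y^*, y^*)$. Since $T\xi = \xi$ and $Ty^* = y^*$, the left side is $G(\xi, y^*, y^*)$, the denominator collapses to $G(\xi,\xi,\xi) + 2G(y^*,y^*,y^*) + 1 = 1$ by (G1), and the numerator becomes $G(\xi, y^*, y^*) + G(\xi, y^*, y^*) + G(\xi, y^*, y^*) = 3G(\xi, y^*, y^*)$. Thus \eqref{eq1} reads $G(\xi, y^*, y^*) \le 3\,G(\xi, y^*, y^*)^2$; since $\xi \ne y^*$ gives $G(\xi, y^*, y^*) > 0$ by (G2), we may divide to obtain $1 \le 3\,G(\xi, y^*, y^*)$, i.e.\ $G(\xi, y^*, y^*) \ge \frac{1}{3}$, and the equality with $G(\xi,\xi,y^*)$ is just symmetry.

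The main obstacle is the first step: unlike a standard Banach-type contraction, the ratio multiplying $G(x,y,z)$ in \eqref{eq1} is not a fixed constant $<1$ but depends on $x,y,z$, and a priori it need not be less than $1$ at all. The delicate point is to choose the substitution and use (G3)/(G5) carefully enough that the numerator is genuinely dominated by the denominator minus $1$ — equivalently, that the ``almost contraction'' term $G(Tx_{n-1}, x_n, x_n) = G(x_n, x_n, x_n) = 0$ vanishes and the cross terms $G(x_{n-1}, x_{n+1}, x_n)$ telescope into the two consecutive displacements — so that the resulting coefficient is bounded away from $1$ (or the product of coefficients converges). If that contraction-factor estimate fails to be uniform, one would instead need a monotonicity argument showing $G(x_{n-1},x_n,x_n)$ is nonincreasing and then argue its limit is $0$ by a separate contradiction, which is the fallback route I would pursue.
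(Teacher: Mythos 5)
Your overall architecture coincides with the paper's: iterate from an arbitrary $x_0$, specialize \eqref{eq1} to the triple $(x_{n-1},x_n,x_n)$ to control $d_n=G(x_n,x_{n+1},x_{n+1})$, deduce $G$-Cauchyness, identify the limit $\xi$ as a fixed point by applying \eqref{eq1} to $(x_n,\xi,\xi)$ and letting $n\to\infty$, and obtain part (c) by applying \eqref{eq1} to $(\xi,y^*,y^*)$, noting the denominator collapses to $1$ and cancelling one factor of $G(\xi,y^*,y^*)>0$. The fixed-point verification and part (c) are carried out correctly and agree with the paper's computations (your application to $(x_n,\xi,\xi)$ is in fact cleaner than the paper's, which miswrites the multiplying factor).

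The genuine gap is exactly where you flag it, and you do not close it. After the substitution one obtains
\[
d_n \;\le\; \frac{2\,G(x_{n-1},x_n,x_{n+1})}{d_{n-1}+2d_n+1}\,d_{n-1}\;\le\;\frac{2d_{n-1}+2d_n}{d_{n-1}+2d_n+1}\,d_{n-1},
\]
and the coefficient $\alpha_n=\frac{2d_{n-1}+2d_n}{d_{n-1}+2d_n+1}$ is strictly less than $1$ precisely when $d_{n-1}<1$; if $d_0\ge 1$ nothing in the hypotheses forces $\alpha_1<1$, so the hoped-for product $\prod_n\lambda_n\to 0$ is not established. Your fallback route (show $d_n$ is nonincreasing, then argue its limit is $0$) is likewise only announced, not executed, and even granting $d_n\to 0$ you would still need summability of the $d_n$ --- not mere convergence to $0$ --- to get $G$-Cauchyness from the chain of (G5) estimates. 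For what it is worth, the paper's own proof does not repair this either: it asserts that $\{\alpha_n\}$ is non-increasing (which presupposes the monotonicity of $d_n$ that one is trying to prove) and that $\alpha_1^n\to 0$, which requires $\alpha_1<1$ and hence essentially $d_0<1$, an unjustified restriction. So your diagnosis of the delicate point is accurate, but the proposal as written leaves the central contraction estimate unproved, and the theorem's Cauchy step remains open in both your argument and the published one.
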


\begin{proof}
Let $x_0\in X$ be arbitrary and construct the sequence $\{x_n\}$ such that $x_{n+1}=Tx_n.$ 

We have, for the triplet $(x_n, x_{n+1},x_{n+1})$, and by setting $d_n = G(x_n, x_{n+1},x_{n+1})$, we have:

\begin{align*}
d_n=G(x_n, x_{n+1},x_{n+1}) & = G(Tx_{n-1}, Tx_{n},Tx_{n})\\
                            & \leq \left(\frac{G(x_{n},x_{n},x_{n})+2G(x_{n-1},x_{n+1},x_{n+1})}{d_{n-1}+2d_n+1}\right) d_{n-1}\\
                             & \leq \left(\frac{2d_{n-1}+2d_{n}}{d_{n-1}+2d_n+1}\right)d_{n-1}.
\end{align*}

\newpage

If we set 

$$\alpha_n =\frac{2d_{n-1}+2d_{n}}{d_{n-1}+2d_n+1}, $$

we get, iteratively 

\begin{align*}
d_n & \leq \alpha_n d_{n-1} \\
    & \leq \alpha_n \alpha_{n-1}d_{n-2}\\
    & \vdots \\
    & \leq \alpha_n \alpha_{n-1}\cdots\alpha_1d_0.
\end{align*}

It is clear that the sequence $\{\alpha_n\}$ is a non-increasing sequence of positive reals, so 
$$\alpha_n \alpha_{n-1}\cdots\alpha_1\leq \alpha_1^n \to 0 \text{ as } n \to \infty.$$
Therefore 
\[
\lim_{n\to \infty}\alpha_n \alpha_{n-1}\cdots\alpha_1=0,
\]
hence

\[
\lim_{n\to \infty}d_n=0.
\]
For any $m,n\in \mathbb{N}, m>n$, sincce we have
\begin{align*}
G(x_n,x_{m},x_{m}) & \leq \sum_{i=0}^{m-n}  G(x_{n+i},x_{n+i+1},x_{n+i+1}), 
\end{align*}
which translate to 

\begin{align*}
G(x_n,x_{m},x_{m}) & \leq \sum_{i=0}^{m-n}d_{n+i},
\end{align*}
we obtain

\begin{align*}
G(x_n,x_{m},x_{m}) & \leq \sum_{i=0}^{m-n}[(\alpha_{n+i}\cdots\alpha_1)d_0].
\end{align*}

Put $b_k = \alpha_{k}\cdots\alpha_1$ and observe that 

\[
\lim_{k\to \infty} \frac{b_{k+1}}{b_k}=0, \text{ i.e. the series } \sum_{k=0}^{\infty} b_k <\infty,
\]
therefore
\[
\sum_{i=0}^{m-n}(\alpha_{n+i}\cdots\alpha_1)\to 0 \text{ as } m \to \infty.
\]
In other words, $\{x_n\}$ is a $G$-Cauchy sequence so $G$-converges to some $\xi\in X.$

\newpage

\underline{Claim:} $\xi$ is a fixed point of $T$.

For the triplet $(x_{n+1},T\xi,T\xi)$ in \eqref{eq1}, we get

\begin{equation}\label{eqn2}
G(x_{n+1},T\xi,T\xi) \leq \left(\frac{G(x_{n},\xi,\xi)+2G(x_{n},T\xi,\xi) }{d_n+2G(\xi,T\xi,T\xi)+1}\right)G(x_{n},T\xi,T\xi)
\end{equation}

On taking the limit on both sides of \eqref{eqn2}, we have $G(\xi,T\xi,T\xi)=0,$ thus $T\xi=\xi.$

If $\kappa$ is a fixed point of $T$ with $\kappa\neq \xi$, then 

\begin{align*}
G(\xi,\kappa,\kappa) & = G(T\xi,T\kappa,T\kappa)\\
                     & \leq [G(\xi,\kappa,\kappa)+2G(\xi,\kappa,\kappa)]G(\xi,\kappa,\kappa) \\
                     & \leq 3 [G(\xi,\kappa,\kappa)]^2.
\end{align*}

Therefore,
 \[
 G(\xi,\kappa,\kappa) = G(\xi,\xi,\kappa) \geq \frac{1}{3}.
\]
\end{proof}

\begin{example}
Let $X=\{ 0,1/2,1\}$ and let $G:X^3\to [0,\infty)$ be defined by
$$ G(0,1,1)=6=G(1,0,0), \ \   G(0,1/2,1/2)=4=G(1/2,0,0)     $$
$$ G(1/2,1,1)=5=G(1,1/2,1/2), \ \ G(0,1/2,1)=15/2$$
$$ G(x,x,x)=0 \ \forall x\in X,        $$
and $G$ is a symmetric function of its three variables.
$(X,G)$ is $G$-complete. 

Let $T:X\to X$ be defined by 
$T0=0, \quad T1/2=1/2, \quad T1=0.$

$$\ G(T0,T1/2,T1/2)=G(0,1/2,1/2)=4; \ G(T0,T1,T1)= G(0,0,0)=0; $$

$$G(T1/2,T1,T1)=G(1/2,0,0)=4; \ G(T0,T1/2,T1)= G(0,1/2,0)=4,  $$

and we have 

\begin{align*}
4 = G(T0,T1/2,T1/2)& =G(0,1/2,1/2) \\
                   & \leq  \frac{G(T0,1/2,1/2)+G(0,T1/2,1/2)+G(0,1/2,T1/2)}{G(0,T0,T0)+2G(1/2,T1/2,T1/2)+1}\\
                   & \times G(0,1/2,1/2)\\
                   & = \frac{4+4+4}{1}4 = 48.
\end{align*}

Again, 

\begin{align*}
0= G(T0,T1,T1)& =G(0,0,0) \\
               & \leq \frac{G(T0,1,1)+G(0,T1,1)+G(0,1,T1)}{G(0,T0,T0)+2 G(1,T1,T1)+1} \times G(0,1,1)\\
               & = \frac{6+6+6}{13}6.
\end{align*}

Also,

\begin{align*}
4= G(T1/2,T1,T1)& =G(1/2,0,0) \\
               & \leq \frac{G(T1/2,1,1)+G(1/2,T1,1)+G(1/2,1,T1)}{G(1/2,T1/2,T1/2)+2 G(1,T1,T1)+1}\\
               & \times G(1/2,1,1)\\
               & = \frac{6+15}{13}5.
\end{align*}

Finally,

\begin{align*}
4= G(T0,T1/2,T1)& =G(0,1/2,0) \\
               & \leq \frac{G(T0,1/2,1)+G(0,T1/2,1)+G(0,1/2,T1)}{G(0,T0,T0)+G(1/2,T1/2,T1/2)+G(1,T1,T1)+1}\\
               & \times G(0,1/2,1)\\
               & = \frac{15+4}{7}\times\frac{15}{2}.
\end{align*}

Therefore $T$ satisfies all the conditions of Theorem \ref{thm1}. Also, $T$ has two distinct fixed points $\{0,1/2\}$ and
$G(0,1/2,1/2)=G(1/2,0,0)= 4 \geq 1/3.$

\end{example}

\begin{remark}
The map $T$ defined in Theorem \ref{thm1} belongs to the category of the so-called weakly Picard operator, as the uniqueness of the fixed is not guaranteed. Moreover, one could also just require $G$ to be an arbitrary $G$-metric, i.e. not neccessarily \textbf{symmetric}.
\end{remark}

In the same style, we present the following result, in which the map $T$ leaves exactly one point of $X$ fixed. This is the Picard case.

\begin{theorem}\label{thm2}
Let $(X,G)$ be a symmetric $G$-complete $G$-metric space and $T$ be a mapping from $X$ to itself. Suppose that $T$ satisfies the following condition:

\begin{equation}\label{eq2}
G(Tx,Ty,Tz) \leq \alpha  \left( \frac{\min\{G(y,Ty,Ty),G(z,Tz,Tz)\}[1+G(x,Tx,Tx)]}{1+G(x,y,z)} \right)+\beta G(x,y,z), 
\end{equation}
 for all $x,y,z\in X$ where $\alpha,\beta$ are nonnegative reals, satisfying
 \[
\alpha+\beta <1. 
 \]
Then $T$ leaves exactly one point of $X$ fixed.
\end{theorem}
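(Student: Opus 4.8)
The plan is to mimic the Picard-iteration argument from the proof of Theorem~\ref{thm1}. Fix an arbitrary $x_0 \in X$ and set $x_{n+1} = Tx_n$. Writing $d_n = G(x_n,x_{n+1},x_{n+1})$, apply \eqref{eq2} to the triplet $(x_{n-1},x_n,x_n)$. The minimum in the numerator becomes $\min\{G(x_n,x_{n+1},x_{n+1}),G(x_n,x_{n+1},x_{n+1})\} = d_n$, the bracket $[1+G(x_{n-1},Tx_{n-1},Tx_{n-1})] = 1+d_{n-1}$, and the denominator $1+G(x_{n-1},x_n,x_n) = 1 + d_{n-1}$. These cancel exactly, so one obtains
\[
d_n \leq \alpha d_n + \beta d_{n-1},
\]
and since $\alpha < 1$ this rearranges to $d_n \leq \frac{\beta}{1-\alpha}\, d_{n-1}$. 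Because $\alpha+\beta<1$ forces $\frac{\beta}{1-\alpha} < 1$, the sequence $\{d_n\}$ decays geometrically.

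Next I would run exactly the Cauchy estimate from Theorem~\ref{thm1}: for $m>n$,
\[
G(x_n,x_m,x_m) \leq \sum_{i=0}^{m-n-1} d_{n+i} \leq d_0 \sum_{i=n}^{\infty}\left(\tfrac{\beta}{1-\alpha}\right)^i,
\]
using (G5) iteratively, and the geometric tail tends to $0$ as $n\to\infty$. Hence $\{x_n\}$ is $G$-Cauchy, and by $G$-completeness it $G$-converges to some $\xi\in X$. To see $\xi$ is fixed, apply \eqref{eq2} to the triplet $(x_n,\xi,\xi)$: the right-hand side is $\alpha\cdot\frac{\min\{G(\xi,T\xi,T\xi),G(\xi,T\xi,T\xi)\}[1+d_n]}{1+G(x_n,\xi,\xi)} + \beta G(x_n,\xi,\xi)$; letting $n\to\infty$ (using Proposition~\ref{prop1} and continuity of $G$ in its arguments, together with $d_n\to 0$ and $G(x_n,\xi,\xi)\to 0$) gives $G(\xi,T\xi,T\xi) \leq \alpha\, G(\xi,T\xi,T\xi)$, so $(1-\alpha)G(\xi,T\xi,T\xi)\leq 0$, whence $G(\xi,T\xi,T\xi)=0$ and $T\xi=\xi$.

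Finally, uniqueness: suppose $\xi$ and $\eta$ are both fixed with $\xi\neq\eta$. Apply \eqref{eq2} to the triplet $(\xi,\eta,\eta)$. Here $G(\eta,T\eta,T\eta)=0$, so the minimum in the numerator is $0$, killing the first term entirely, and $G(\xi,T\xi,T\xi)=0$ as well; what remains is $G(\xi,\eta,\eta) = G(T\xi,T\eta,T\eta) \leq \beta\, G(\xi,\eta,\eta)$. Since $\beta<1$ this forces $G(\xi,\eta,\eta)=0$, i.e. $\xi=\eta$, a contradiction. Therefore $T$ has exactly one fixed point.

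The argument is essentially routine once the key algebraic cancellation in the numerator/denominator is spotted; the only mild subtlety I anticipate is justifying the passage to the limit in the fixed-point step — one must invoke symmetry of $G$ and Proposition~\ref{prop1} to guarantee that all the relevant $G$-values converge as claimed. This is not a genuine obstacle, merely a point that should be stated carefully rather than glossed over.
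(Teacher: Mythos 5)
Your proposal is correct and follows essentially the same route as the paper: the same Picard iteration, the same cancellation of $[1+d_{n-1}]$ against the denominator yielding $d_n \leq \frac{\beta}{1-\alpha}d_{n-1}$, and the standard Cauchy/completeness conclusion. In fact you supply more detail than the paper does, since the paper dismisses the fixed-point verification and uniqueness with ``by usual procedure'' and ``given for free,'' whereas you carry out both limit arguments (correctly) via the triplets $(x_n,\xi,\xi)$ and $(\xi,\eta,\eta)$.
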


\begin{proof}
Let $x_0\in X$ be arbitrary and construct the sequence $\{x_n\}$ such that $x_{n+1}=Tx_n.$ 

We have, for the triplet $(x_n, x_{n+1},x_{n+1})$, and by setting $d_n = G(x_n, x_{n+1},x_{n+1})$, we have:

\begin{align*}
d_n & =  G(Tx_{n-1}, Tx_{n},Tx_{n})\\
                            & \leq \frac{\alpha d_n[1+d_{n-1}]}{1+d_{n-1}}+ \beta d_{n-1},
\end{align*}

i.e. 
\begin{align}\label{usual}
d_n \leq \frac{\beta}{1-\alpha}d_{n-1}.
\end{align}

By usual procedure from \eqref{usual}, since $\left(\frac{\beta}{1-\alpha}\right)<1$, it follows that $\{T^nx_0\}$ is a $G$-Cauchy sequence. By $G$-completeness of $X$, there exists $x^*\in X$ such that $T^nx_0$ $G$-converges to $x^*.$

The uniqueness of $x^*$ is given for free by the condition \eqref{eq2}.
\end{proof}

We present, without proof, the following genralisation of Theorem \ref{thm2}.

\begin{theorem}\label{cor}
Let $(X,G)$ be a symmetric $G$-complete $G$-metric space and $T$ be a mapping from $X$ to itself. Suppose that $T$ satisfies the following condition:

\begin{align}\label{eq3}
G(Tx,Ty,Tz) \leq &\ a_1 \left( \frac{G(y,Ty,Ty)[1+G(x,Tx,Tx)]}{1+G(x,y,z)} \right)\nonumber \\
 & +  a_2 \left( \frac{G(z,Tz,Tz)[1+G(x,Tx,Tx)]}{1+G(x,y,z)} \right) +a_3 G(x,y,z) 
\end{align}
 for all $x,y,z\in X$ where $a_i:=a_i(x,y,z),i=1,2,3, $ are nonnegative functions such that for arbitrary $0<\lambda_1<1$:
 \[
a_1(x,y,z)+a_2(x,y,z) + a_3(x,y,z) = \sum_{i=1}^{3}a_i(x,y,z) \leq \lambda_1. 
 \]
Then $T$ leaves exactly one point of $X$ fixed.
\end{theorem}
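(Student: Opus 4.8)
The plan is to follow the proof of Theorem \ref{thm2} line by line, the only genuinely new ingredient being the bookkeeping forced by the fact that the coefficients $a_i = a_i(x,y,z)$ vary with the argument triple.

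First I would fix $x_0 \in X$, set $x_{n+1} = Tx_n$ and $d_n = G(x_n, x_{n+1}, x_{n+1})$, and apply \eqref{eq3} to the triplet $(x_{n-1}, x_n, x_n)$. Using the symmetry of $G$ and the identities $G(x_{n-1}, Tx_{n-1}, Tx_{n-1}) = G(x_{n-1}, x_n, x_n) = d_{n-1}$ and $G(x_n, Tx_n, Tx_n) = d_n$, the factor $1 + G(x_{n-1}, Tx_{n-1}, Tx_{n-1})$ in each numerator cancels the denominator $1 + G(x_{n-1}, x_n, x_n)$, so that the first two terms collapse to $(a_1 + a_2) d_n$ and we are left with
\[
d_n \;\le\; (a_1 + a_2)\, d_n + a_3\, d_{n-1},
\qquad\text{i.e.}\qquad
d_n \;\le\; \frac{a_3}{1 - a_1 - a_2}\, d_{n-1},
\]
where the $a_i$ are evaluated at $(x_{n-1}, x_n, x_n)$ (the division is legitimate since $a_1 + a_2 \le \lambda_1 < 1$).

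Second comes the one point that needs a little thought: the ratio $a_3/(1 - a_1 - a_2)$ depends on $n$ and the $a_i$ need not converge, so I would bound it uniformly. Writing $s = a_1 + a_2 \in [0, \lambda_1]$ and using $a_3 \le \lambda_1 - s$, one has $\frac{a_3}{1-s} \le \frac{\lambda_1 - s}{1-s}$, and since $s \mapsto \frac{\lambda_1 - s}{1-s}$ has derivative $\frac{\lambda_1 - 1}{(1-s)^2} < 0$ it is non-increasing on $[0,1)$, hence bounded above by its value $\lambda_1$ at $s = 0$. Thus $d_n \le \lambda_1 d_{n-1} \le \cdots \le \lambda_1^{\,n} d_0 \to 0$. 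From here the rectangle inequality (G5), exactly as in the proof of Theorem \ref{thm1}, gives $G(x_n, x_m, x_m) \le \sum_{i\ge 0} d_{n+i} \le \frac{\lambda_1^{\,n}}{1-\lambda_1}\, d_0$ for $m > n$, so $\{x_n\}$ is $G$-Cauchy and, by $G$-completeness, $G$-converges to some $x^* \in X$.

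Third, I would show $x^*$ is fixed by applying \eqref{eq3} to $(x_n, x^*, x^*)$, which yields
\[
G(x_{n+1}, Tx^*, Tx^*) \;\le\; (a_1 + a_2)\,\frac{G(x^*, Tx^*, Tx^*)(1 + d_n)}{1 + G(x_n, x^*, x^*)} + a_3\, G(x_n, x^*, x^*);
\]
letting $n \to \infty$ and using $d_n \to 0$, $G(x_n, x^*, x^*) \to 0$, the continuity of $G$ (as already used when passing to the limit in \eqref{eqn2}), and $a_1 + a_2 \le \lambda_1$, the $\limsup$ of the right-hand side is at most $\lambda_1\, G(x^*, Tx^*, Tx^*)$, forcing $G(x^*, Tx^*, Tx^*) = 0$, i.e. $Tx^* = x^*$. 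For uniqueness, if $\eta$ is another fixed point I would apply \eqref{eq3} to $(x^*, \eta, \eta)$: since $G(x^*, Tx^*, Tx^*) = G(\eta, T\eta, T\eta) = 0$ the two rational terms vanish and we get $G(x^*, \eta, \eta) \le a_3\, G(x^*, \eta, \eta) \le \lambda_1\, G(x^*, \eta, \eta)$, whence $G(x^*, \eta, \eta) = 0$ and $\eta = x^*$. The main obstacle is thus purely the uniform control of the one-step contraction factor against the varying $a_i$, handled in the second step; everything else is a transcription of the arguments already given for Theorems \ref{thm1} and \ref{thm2}.
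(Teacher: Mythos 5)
The paper states this theorem explicitly ``without proof,'' so there is nothing to compare against except the proof of Theorem \ref{thm2}, of which your argument is the natural and correct completion: the cancellation of $1+d_{n-1}$ giving $d_n\le(a_1+a_2)d_n+a_3d_{n-1}$, the uniform bound $\tfrac{a_3}{1-a_1-a_2}\le\tfrac{\lambda_1-s}{1-s}\le\lambda_1$ handling the variable coefficients, and the limit argument at $(x_n,x^*,x^*)$ are all sound. Your write-up in fact supplies details the paper glosses over even in Theorem \ref{thm2} (the verification that the limit is actually a fixed point, and the uniqueness step), so there is nothing to correct.
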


Another genralisation of Theorem \ref{thm2} is provided by the following:

\begin{theorem}\label{thmfin}
Let $(X,G)$ be a symmetric $G$-complete $G$-metric space where $T$ is an an orbitally continuous mapping from $X$ to itself. If it is the case that $T$ satisfies the following condition:

\begin{align}\label{eqfin}
G(Tx,Ty,Tz) \leq &\ a_1 G(x,y,z)+a_2[G(x,Tx,Tx)+G(y,Ty,Ty)+G(z,Tz,Tz)] \nonumber \\
& + a_3[ G(Tx,y,z)+ G(x,Ty,z)+G(x,y,Tz)]\nonumber \\
& + a_4\min\{G(y,Ty,Ty),G(z,Tz,Tz)\}\frac{[1+G(x,Tx,Tx)]}{1+G(x,y,z)}\nonumber \\
& + a_5G(Tx,y,z)[1+G(x,Ty,z)+G(x,y,Tz)][1+G(x,y,z)]^{-1}\nonumber \\
& + a_6G(x,y,z)[1+G(x,Tx,Tx)+G(Tx,y,z)][1+G(x,y,z)]^{-1}
\nonumber \\
& + a_7 G(Tx,y,z) 
\end{align}
 for all $x,y,z\in X$ where $a_i:=a_i(x,y,z), i=1,\cdots,7,$ are nonnegative functions such that for arbitrary $0<\lambda_1<1$:
 \[
 a_1(x,y,z)+3a_2(x,y,z)+4a_3(x,y,z)+ a_4(x,y,z) 
 +a_6(x,y,z) \leq \lambda_1, 
 \]
then $T$ leaves at least one point of $X$ fixed.
\end{theorem}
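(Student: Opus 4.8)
The plan is to follow the template of the proofs of Theorems~\ref{thm1} and \ref{thm2}: build the Picard sequence from an arbitrary seed, show the gaps between consecutive iterates decay geometrically, conclude the sequence is $G$-Cauchy, and then use orbital continuity (rather than the contraction itself, which does not force uniqueness here) to recognise the limit as a fixed point. So fix $x_0\in X$, put $x_{n+1}=Tx_n$ and $d_n=G(x_n,x_{n+1},x_{n+1})$.

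The first step is to substitute the triplet $(x_{n-1},x_n,x_n)$ into \eqref{eqfin}. Many terms collapse: since $G(x_n,x_n,x_n)=0$ by (G1), the $a_5$-term and the $a_7$-term vanish, the first entry of the $a_3$-bracket vanishes, and the bracket in the $a_6$-term reduces to $1+G(x_{n-1},x_n,x_n)$, so the whole $a_6$-term becomes $a_6\,d_{n-1}$. By the symmetry (G4) the two surviving entries of the $a_3$-bracket are equal, each being $G(x_{n-1},x_n,x_{n+1})$; using the symmetry hypothesis to write $G(x_n,x_n,x_{n+1})=d_n$ and then (G5) in the form $G(x_{n-1},x_n,x_{n+1})\le G(x_{n-1},x_n,x_n)+G(x_n,x_n,x_{n+1})=d_{n-1}+d_n$ handles that bracket. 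The $a_1$-term is $a_1 d_{n-1}$, the $a_2$-bracket is $d_{n-1}+2d_n$, and in the $a_4$-term both the minimum and the quotient simplify so that it is just $a_4 d_n$. Collecting everything, with each $a_i=a_i(x_{n-1},x_n,x_n)$,
\[
d_n \;\le\; (a_1+a_2+2a_3+a_6)\,d_{n-1} \;+\; (2a_2+2a_3+a_4)\,d_n .
\]

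Next, write $s_n=2a_2+2a_3+a_4$ (evaluated at $(x_{n-1},x_n,x_n)$). The coefficient hypothesis gives $a_1+a_2+2a_3+a_6\le \lambda_1-s_n$, and since the left side is nonnegative this forces $s_n\le\lambda_1<1$, so $1-s_n>0$ and one may divide to obtain $d_n\le \frac{a_1+a_2+2a_3+a_6}{1-s_n}\,d_{n-1}$. Because $\frac{\lambda_1-s_n}{1-s_n}\le\lambda_1$ for $s_n\in[0,1)$ (cross-multiply: $\lambda_1-s_n\le\lambda_1-\lambda_1 s_n$ amounts to $s_n(1-\lambda_1)\ge0$), we get $d_n\le\lambda_1 d_{n-1}$ for every $n$, hence $d_n\le\lambda_1^n d_0\to0$. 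Iterating (G5) exactly as in the proof of Theorem~\ref{thm1} yields, for $m>n$, $G(x_n,x_m,x_m)\le\sum_{i\ge0}d_{n+i}\le \lambda_1^n d_0/(1-\lambda_1)\to0$, so $\{x_n\}$ is $G$-Cauchy and, by $G$-completeness, $G$-converges to some $x^*\in X$. Finally, since $x_n=T^nx_0\to x^*$, the subsequence $n_i=i$ satisfies $T^{n_i}x_0\to x^*$, and orbital continuity of $T$ gives $Tx^*=\lim_i T\,T^{n_i}x_0=\lim_i x_{n_i+1}=x^*$; thus $x^*$ is a fixed point of $T$.

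The term-by-term simplification after the substitution and the telescoping Cauchy estimate are routine. The one place needing care — and the step I expect to be the main (minor) obstacle — is the passage from the rearranged inequality to $d_n\le\lambda_1 d_{n-1}$: one must check that $1-s_n>0$ so the division is legitimate and that the resulting ratio is \emph{uniformly} bounded by $\lambda_1$, and it is precisely the constraint $a_1+3a_2+4a_3+a_4+a_6\le\lambda_1$ that is engineered to supply both facts. Note that $a_5$ and $a_7$ never enter this estimate, which is consistent with the theorem claiming only existence, not uniqueness, of the fixed point.
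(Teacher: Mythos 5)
Your proposal is correct and follows essentially the same route as the paper: apply \eqref{eqfin} to the triplet of consecutive iterates, collapse the terms to get $d_{n}\le(a_1+a_2+2a_3+a_6)d_{n-1}+(2a_2+2a_3+a_4)d_{n}$, deduce a geometric decay, telescope via (G5) to get $G$-Cauchyness, and invoke orbital continuity at the limit. Your treatment is in fact more careful than the paper's, since you explicitly verify that the varying ratio $\frac{a_1+a_2+2a_3+a_6}{1-(2a_2+2a_3+a_4)}$ is \emph{uniformly} bounded by $\lambda_1$, a point the paper glosses over with ``by usual procedure.''
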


\begin{proof}
Let $x_0\in X$ be arbitrary and construct the sequence $\{x_n\}$ such that $x_{n+1}=Tx_n.$ 

We have, for the triplet $(x_n, x_{n+1},x_{n+1})$, and by setting $d_n = G(x_n, x_{n+1},x_{n+1})$, we have:

\begin{align*}
d_{n+1}  = & \  G(Tx_{n}, Tx_{n+1},Tx_{n+1})\\
     \leq & \ a_1 d_n + a_2[d_n+2d_{n+1}] + a_3[2d_n+2d_{n+1}]                    +a_4  d_{n+1}+  a_6  d_{n}.
\end{align*}

i.e. 
\begin{align}\label{byusual}
d_{n+1} \leq \frac{a_1+a_2+2a_3+a_6}{1-(2a_2+2a_3+a_4)}d_{n}.
\end{align}

By usual procedure from \eqref{byusual}, since $$a_1(x,y,z)+3a_2(x,y,z)+4a_3(x,y,z)+ a_4(x,y,z) 
 +a_6(x,y,z) <1,$$ it follows that $\{T^nx_0\}$ is a $G$-Cauchy sequence. By $G$-completeness of $X$, there exists $x^*\in X$ such that $T^nx_0$ $G$-converges to $x^*.$
Since $X$ is $G$-complete. Obviously $x^*$ is the desired fixed point by orbitally continuity of $T$.

\end{proof}

\begin{example}
Let $[a,b]$ where $1<a<b.$ Let the $G$-metric $G$ be given on $[a,b]$ as:

\[
G(x,y,z)= \max\{|x-y|,|y-z|,|z-x|\}.
\]
Let $T:X\to X$ be defiend as follows:
\[
Tx = x+ \frac{1}{x}-\frac{1}{b}.
\]
Let $1-\frac{1}{b^2}\leq \alpha<1$ and $\beta,\gamma,\delta,\lambda,\mu$ and $L$ be arbitrary nonnegative reals such that 

\[
a_1(x,y,z)\leq \alpha,\ a_2(x,y,z)\leq \beta, \ a_3(x,y,z)\leq \gamma,  \  a_4(x,y,z)\leq \delta, 
\]

and 
\[
a_5(x,y,z)\leq \lambda,\ a_6(x,y,z)\leq \mu, \ a_7(x,y,z)\leq L
\]
and
\[
\alpha + 3\beta + 4 \gamma + \mu + \delta <1.
\]
Here all the conditions of Theorem \ref{thmfin} are satisfied and it is readily seen that $b$ is a fixed point of $T$.
\end{example}

In the extension of metric fixed point theory, generalization of metric spaces via complex valued ordered metric space, were introduced. The author plans to study more thoroughly, and with examples, fixed point results in the setting of ordered $G$-metric space in another paper \cite{Gaba6} but we present here a first result of the kind.

Recall that we can define a partial order $\preceq$ on the set $\mathbb{C}$ of complex numbers by setting, for any $z_1,z_2\in \mathbb{C},$
 $$z_1 \preceq z_2 \Longleftrightarrow Re(z_1)\leq Re(z_2) \text{ and } Im(z_1)\leq Im(z_2).$$ 
 Moreover, on partial ordered $G$-metric space, the convergence of a sequence is interpreted in the canonical way, i.e.
 for a sequence $\{x_n\}\subseteq (X,G,\preceq)$ where $(X,G,\preceq)$ is a partial ordered complex valued $G$-metric space,
 
 $$x_n \ G\text{-converges to } x^* \Longleftrightarrow \forall c \in \mathbb{C}, \text{with } 0\preceq c, \ \exists n_0\in \mathbb{N}:\forall n>n_0\ G(x^*,x_n,x_n) \preceq c. $$

Similarly for $G$-Cauchy sequences. Furthermore, a self mapping $T$ defined on a partial ordered $G$-metric space $(X,G,\preceq)$ is nondecreasing if $Tx\preceq Ty$ whenever $x\preceq y$, for $x,y \in X.$

We then state the result:

\begin{theorem}\label{thmfinor}

Let $(X,G,\preceq)$ be a symmetric, $G$-complete, complex valued $G$-metric space. Assume that if $\{x_n\}$ is a nondecreasing sequence of elements of $X$ such that $x_n \ G\text{-converges to } x^*$, then $x_n \preceq x^*$ for all $n\in \mathbb{N}$.
Let $T:X\to X$ be a nondecreasing mapping such that:

\begin{align}\label{eqfinor}
G(Tx,Ty,Tz) \preceq &\ a_1 G(x,y,z)+a_2[G(x,Tx,Tx)+G(y,Ty,Ty)+G(z,Tz,Tz)] \nonumber \\
& + a_3[ G(Tx,y,z)+ G(x,Ty,z)+G(x,y,Tz)]\nonumber \\
& + a_4\min\{G(y,Ty,Ty),G(z,Tz,Tz)\}\frac{[1+G(x,Tx,Tx)]}{1+G(x,y,z)}\nonumber \\
& + a_5G(Tx,y,z)[1+G(x,Ty,z)+G(x,y,Tz)][1+G(x,y,z)]^{-1}\nonumber \\
& + a_6G(x,y,z)[1+G(x,Tx,Tx)+G(Tx,y,z)][1+G(x,y,z)]^{-1}
\nonumber \\
& + a_7 G(Tx,y,z) 
\end{align}
for all $x\preceq y\preceq z\in X$ where $a_i:=a_i(x,y,z), i=1,\cdots,7,$ are nonnegative functions such that for arbitrary $0<\lambda_1<1$:
 \[
 a_1(x,y,z)+3a_2(x,y,z)+4a_3(x,y,z)+ a_4(x,y,z) 
 +a_6(x,y,z) \leq \lambda_1. 
 \]
 
 If there exists $x_0\in X$ with $x_0\preceq Tx_0$,
then $T$ leaves at least one point of $X$ fixed.

\end{theorem}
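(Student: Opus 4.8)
The plan is to run the proof of Theorem~\ref{thmfin} with the orbital continuity of $T$ replaced by its monotonicity together with the standing hypothesis on limits of nondecreasing sequences. Let $x_0$ satisfy $x_0\preceq Tx_0$ and set $x_{n+1}=Tx_n$. Since $T$ is nondecreasing, an induction on the starting inequality gives
\[
x_0\preceq x_1\preceq x_2\preceq\cdots,
\]
so every triple $(x_n,x_{n+1},x_{n+1})$ is ordered and \eqref{eqfinor}, which is only postulated for ordered triples, legitimately applies to it. Evaluating \eqref{eqfinor} at $(x_n,x_{n+1},x_{n+1})$, using $G(x_{n+1},x_{n+1},x_{n+1})=0$, the symmetry of $G$ and axiom (G5) to bound the mixed terms $G(x_n,x_{n+1},x_{n+2})$ by $d_n+d_{n+1}$, one reaches precisely \eqref{byusual}, i.e. $d_{n+1}\preceq\kappa_n d_n$ with $\kappa_n=\frac{a_1+a_2+2a_3+a_6}{1-(2a_2+2a_3+a_4)}$ evaluated at $(x_n,x_{n+1},x_{n+1})$; the coefficient constraint $a_1+3a_2+4a_3+a_4+a_6\le\lambda_1<1$ forces $0\le\kappa_n\le\lambda_1<1$ uniformly in $n$.

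Since $0\preceq d_{n+1}\preceq\lambda_1 d_n$ with $\lambda_1$ a real number in $[0,1)$, the moduli obey $|d_{n+1}|\le\lambda_1|d_n|$, hence $|d_n|\to0$ geometrically; feeding this into (G5) exactly as in the ``usual procedure'' of Theorem~\ref{thmfin} shows that $\{x_n\}$ is $G$-Cauchy, and $G$-completeness yields $x^*\in X$ with $x_n$ $G$-converging to $x^*$.

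It remains to check $Tx^*=x^*$. The sequence $\{x_n\}$ is nondecreasing and $G$-converges to $x^*$, so by hypothesis $x_n\preceq x^*$ for all $n$; thus $(x_n,x^*,x^*)$ is an ordered triple and \eqref{eqfinor} may be applied to $G(Tx_n,Tx^*,Tx^*)=G(x_{n+1},Tx^*,Tx^*)$. Letting $n\to\infty$ and using joint continuity of $G$ together with $G(x_n,x^*,x^*)\to0$, $d_n\to0$, and $G(x_n,Tx^*,x^*)\to G(x^*,Tx^*,Tx^*)$ (the last by symmetry), every summand carrying a factor tending to $0$ disappears and one is left with
\[
G(x^*,Tx^*,Tx^*)\preceq(2a_2+2a_3+a_4)\,G(x^*,Tx^*,Tx^*).
\]
Because $2a_2+2a_3+a_4\le a_1+3a_2+4a_3+a_4+a_6\le\lambda_1<1$ while $G(x^*,Tx^*,Tx^*)\succeq0$, this is possible only if $G(x^*,Tx^*,Tx^*)=0$, i.e. $Tx^*=x^*$.

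The algebra leading to \eqref{byusual} and the Cauchy estimate are verbatim as in Theorem~\ref{thmfin}. The genuinely new ingredients, and the places needing care, are: (i) verifying that the orbit is nondecreasing, which is exactly what allows \eqref{eqfinor} to be used both along the orbit and at the limit point, since there it is assumed only for ordered triples; and (ii) handling the partial (non-total) order $\preceq$ on $\mathbb{C}$ in the limiting steps --- this is why I pass to moduli for the geometric decay, and why one should invoke continuity of $G$ and (as is implicit in the hypotheses) boundedness of the coefficient functions $a_i$ along the relevant sequences before passing to the limit. I expect no obstacle beyond this bookkeeping.
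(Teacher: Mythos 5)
Your proof follows the same route as the paper's: iterate from $x_0$, show the orbit is nondecreasing so that \eqref{eqfinor} applies along it, run the contraction estimate of Theorem \ref{thmfin} to get a $G$-Cauchy orbit converging to some $x^*$, then use the hypothesis $x_n \preceq x^*$ to apply \eqref{eqfinor} to the triple $(x_n, x^*, x^*)$ and pass to the limit, obtaining $G(x^*,Tx^*,Tx^*) \preceq (2a_2+2a_3+a_4)\,G(x^*,Tx^*,Tx^*)$ and hence $Tx^*=x^*$. You supply details the paper leaves implicit (the induction giving monotonicity of the orbit, passing to moduli to handle the partial order on $\mathbb{C}$, and the bound $2a_2+2a_3+a_4\le\lambda_1<1$), but the argument is essentially identical.
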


\begin{proof}
It is very easy to see that the sequence of iterates $T^nx_0, n=1,2,\cdots,$ is nondecreasing and $G$-converges to some $x^*\in X$. Therefore $x_n \preceq x^*$ for all $n\in \mathbb{N}$.
Now applying \eqref{eqfinor} to the triplet $(x_{n+1},Tx^*,Tx^*)$ and taking the limit as $n\to \infty$, we have:

\begin{align*}
G(x^*,Tx^*,Tx^*) & = G(Tx_{n},Tx^*,Tx^*)\\
                     & \preceq 2 a_2 G(x^*,Tx^*,Tx^*)+2 a_3 G(x^*,Tx^*,Tx^*)+a_4 G(x^*,Tx^*,Tx^*)\\
                     &= [2 a_2+2 a_3+a_4]G(x^*,Tx^*,Tx^*).
\end{align*} 

Since $2 a_2+2 a_3+a_4<1$, this is a contradiction unless $G(x^*,Tx^*,Tx^*)=0,$ and hence $Tx^*=x^*.$

\end{proof}

\begin{example}
Let $X= [1.5,2]$ with the usual partial order $``\leq"$. Let the $G$-metric $G$ be given on $X$ as:

\[
G(x,y,z)= \max\{|x-y|,|y-z|,|z-x|\}(1+i).
\]
Let $T:X\to X$ be defiend as follows:

$$Tx=
\begin{cases}
1.81, \ \ \ \ \  \ \  \text{ if } \ 1.5\leq x < 1.75 ,\\
x+\frac{1}{x}-\frac{1}{2},  \text{ if }\  1.75\leq x \leq 2.
\end{cases}
$$
Let $\alpha\in \left[\frac{3}{4},1\right), \beta=\gamma=\delta=\mu=\lambda=0$ and $L\geq 3$ be arbitrary nonnegative reals such that 

\[
a_1(x,y,z)\leq \alpha,\ 0=a_2(x,y,z)\leq \beta, \ 0=a_3(x,y,z)\leq \gamma,  \  0=a_4(x,y,z)\leq \delta, 
\]

and 
\[
0=a_5(x,y,z)\leq \lambda,\ a_6(x,y,z)\leq \mu, \ a_7(x,y,z)\leq L.
\]
Here all the conditions of Theorem \ref{thmfinor} are satisfied and it is readily seen that $2$ is a fixed point of $T$.

\end{example}

\begin{remark}
In the abover theorem, one could observe that there is no need of imposing any type of continuity on the map $T$. It is also good to mention at this point that complex valued $G$-metric space have close similarities with cone $G$-cone metric spaces( see\cite{me}) even though both spaces are very different. Moreover the rational contraction we considered is better applicable and understood when studied in the complex valued case.
\end{remark}

\bibliographystyle{amsplain}

\end{document}